\newtheorem{lemma}{Lemma}[section]
\newtheorem{corollary}{Corollary}[section]
\newtheorem{main}{Main Theorem}[section]
\newtheorem{proposition}{Proposition}[section]
\newtheorem{remark}{Remark}[section]
\numberwithin{equation}{section} \numberwithin{theorem}{section}
\numberwithin{example}{section} \numberwithin{remark}{section}
\numberwithin{figure}{section} \numberwithin{algorithm}{section}
\newcommand{\ep}{\varepsilon}
\def\bc{\begin{center}}
\def\ec{\end{center}}
\def\ba{\begin{array}}
\def\ea{\end{array}}
\def\bt{\begin{tabular}}
\def\et{\end{tabular}}
\def\be{\begin{equation}}
\def\ee{\end{equation}}
\def\bma{\begin{matrix}}
\def\ema{\end{matrix}}
\def\vu{{\mathbf u}}
\def\RR{{\mathbb R}}
\def\ra{\rightarrow}
\def\na{\nabla}
\def\nc{div\,}
\def\cn{\cdot\nabla}
\def\pa{\partial}
\def\lambdaS{\lambda_{S}}
\def\lambdaM{\lambda_{M}}
\begin{document}

\title[A  sharp blow-up condition for attractive Euler-Poisson equations]
{A sharp local blow-up condition for\\Euler-Poisson equations with attractive forcing}

\author[Bin Cheng]{Bin Cheng}
\address[Bin Cheng]{\newline
        Department of Mathematics\newline
	University of Michigan, 
	Ann Arbor, MI 48109 USA}
\email []{bincheng@umich.edu}
\urladdr{http://www.umich.edu/~bincheng}
\author[Eitan Tadmor]{Eitan Tadmor}
\address[Eitan Tadmor]{\newline
        Department of Mathematics,  Institute for Physical Science and Technology\newline
        and Center of Scientific Computation And Mathematical Modeling (CSCAMM)\newline
        University of Maryland, 
        College Park, MD 20742 USA}
 \email[]{tadmor@cscamm.umd.edu}
\urladdr{http://www.cscamm.umd.edu/\~{}tadmor}


\date{Received by the editors Month day, 200x; accepted for publication (in revised form) Month day, 200x.} 

\keywords{Euler-Poisson equations; critical thresholds; finite time blow-up.}
\subjclass{35Q35; 35B30}

\thanks{\textbf{Acknowledgment.} Research was supported in part by NSF grants 07-07949, 07-57227 and ONR grant N000140910385.}


\begin{abstract}
We improve the recent result of \cite{ChaeTa:star} proving a one-sided threshold condition which leads to finite-time breakdown of the Euler-Poisson equations in arbitrary dimension $n$. 
\end{abstract}
\maketitle

\section{Introduction}
The pressure-less Euler-Poisson (EP) equations in dimension $n\ge1$ are
\begin{subequations}
\begin{align}\label{EP10}
\rho_t+\nc(\rho\vu)&=0\\
\label{EP20}
\vu_t+\vu\cn\vu&=k\nabla\Delta^{-1}(\rho-c),
\end{align}
\end{subequations}
governing the unknown density $\rho=\rho(t,x):\RR_+\times \RR^n \mapsto \RR_+$ and velocity 
$\vu=\vu(t,x):\RR_+\times \RR^n \mapsto \RR^n$ subject to initial conditions $\rho(0,x)=\rho_0(x)$ and
$\vu(0,x)=\vu_0(x)$. They involve two constants: a fixed background state $c>0$ such that $\int(\rho-c)dx=0$, and the constant $k$ which parameterizes the repulsive $k>0$ or attractive $k<0$ forcing,  caused by the Poisson potential $\Delta^{-1}(\rho-c)$. The EP system appears in numerous applications including semiconductor, plasma physics ($k>0$) and collapse of interstellar cloud ($k<0$). 

This paper is restricted to the \emph{attractive case}, $k<0$. For simplicity, we set $c=1$, $k=-1$ in (\ref{EP10}), (\ref{EP20}) to arrive at the unit-free EP system,
\begin{subequations}
\begin{align}\label{EP1}
\rho_t+\nc(\rho\vu)&=0,\\
\label{EP2}
\vu_t+\vu\cn\vu&=-\nabla\Delta^{-1}(\rho-1).
\end{align}
\end{subequations}
All discussion in this paper remains valid for EP system with physical parameters $c>0$, $k<0$ upon a simple rescaling argument --- see Corollary \ref{Coro} below.

We are concerned here with the persistence of $C^1$  regularity for solutions of the attractive EP system. Our main theorem reveals a {\it pointwise} criterion on the initial data, a so-called critical threshold criterion \cite{ELT:EPstar,LiTa:spectral,LiTa:restricted}, that leads to finite time blow-up of $\nabla\vu$. It is a sharp, nonlinear quantification of balance between $\nc\vu$ and $\rho$, two competing mechanisms that dictate the $C^1$ regularity of EP flows. Our result also stands out as a generalization of several existing results \cite{ELT:EPstar,ChaeTa:star,LiTa:HYP,LiTa:restricted} for which further discussion is given after the Main Theorem and its corollary.

\begin{main} Consider the $n$-dimensional, attractive Euler-Poisson system (\ref{EP1}), (\ref{EP2}) subject to 
initial data $\rho_0$, $\vu_0$. Then, the solution will lose $C^1$ regularity at a finite time 
$t=t_c < \infty$, if there exists 
a non-vacuum initial state  $\rho_0(\bar{x})>0$ with vanishing initial vorticity,  $\nabla\times\vu_0(\bar{x})=0$, such that the following sup-critical condition is fulfilled,
\begin{subequations}
\begin{equation}\label{eq:sc}
\nc\vu_0(\bar{x}) < sgn(\rho_0(\bar{x})-1)\sqrt{nF(\rho_0(\bar{x}))},
\end{equation}
where
\begin{equation}\label{eq:scb}
F(\rho):=\left\{\begin{array}{ll}
\displaystyle 1+\frac{2\rho}{n-2} - \frac{n\rho^{2/n}}{n-2}, & n\ne2, \\ \\
\displaystyle 1-\rho +\rho\ln\rho, & n=2.
\end{array}\right.
\end{equation}
\end{subequations}
In particular, $\min_x\nc\vu(t,x)\ra-\infty$ and $\max_x\rho(t,x)\ra\infty$ as $t\uparrow t_c$.
\end{main}

\begin{proof} Combine Lemma \ref{comp:lemma} and Lemma \ref{Omega:lemma} below, while noting that the curve
\[
\nc\vu = sgn(\rho-1)\sqrt{nF(\rho)},
\]
is the separatrix along the boundary of the blow-up region $\Omega=\Omega_1\cup \Omega_2$ defined in (\ref{eqs:Omega}) and illustrated in Figure \ref{fig1}.
\end{proof}

We note by passing that, by classical arguments, the force-free Euler system $\vu_t+\vu\cn\vu=0$ exhibits finite time blow-up if and only if there exists at least one {\it negative} eigenvalue of $\nabla\vu_0(\bar{x})$. In the above theorem, however, finite-time blow-up can occur solely depending on the initial profile of $\nc\vu_0$ and $\rho_0$ regardless of individual eigenvalues of $\na\vu_0$.

We also note that, by rescaling $\rho$ to $\rho/c$, $x$ to $\sqrt{-kc}\,x$ and $t$ to $\sqrt{-kc}\,t$, the Main Theorem immediately applies to the EP system  (\ref{EP10}), (\ref{EP20}) with physical parameters. Since the EP system with $k<0$ models the collapse of interstellar cloud, the following corollary reveals a pointwise condition for mass concentration, $\rho\ra\infty$, which interestingly preludes the birth of new stars. 
\begin{corollary}\label{Coro} Consider the Euler-Poisson system (\ref{EP10}), (\ref{EP20}) with $c>0$, $k<0$ subject to 
initial data $\rho_0$, $\vu_0$. Then, the solution will lose $C^1$ regularity at a finite time $t_c < \infty$, if there exists 
a non-vacuum initial state  $\rho_0(\bar{x})>0$ with vanishing initial vorticity,  $\nabla\times\vu_0(\bar{x})=0$, such that the sup-critical condition is fulfilled,
\begin{equation}\label{eq:scc}
\nc\vu_0(\bar{x}) < sgn(\rho_0(\bar{x})-c)\sqrt{-nkcF\left(\frac{\rho_0(\bar{x})}{c}\right)}
\end{equation}
where $F(\cdot)$ is given in (\ref{eq:scb}).
In particular, $\min_x\nc\vu(t,x)\ra-\infty$ and $\max_x\rho(t,x)\ra\infty$ as $t\uparrow t_c$.
\end{corollary}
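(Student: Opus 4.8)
The plan is to deduce Corollary \ref{Coro} from the Main Theorem by a scaling argument that turns the physical system (\ref{EP10}), (\ref{EP20}) into the unit-free system (\ref{EP1}), (\ref{EP2}). Writing $s:=\sqrt{-kc}>0$, I would introduce the rescaled independent variables $\tilde t=st$, $\tilde x=sx$ and the rescaled unknowns $\tilde\rho(\tilde t,\tilde x):=\rho(t,x)/c$ and $\tilde\vu(\tilde t,\tilde x):=\vu(t,x)$, leaving the velocity itself unchanged. A preliminary check confirms that the background-state constraint is preserved: since $d\tilde x=s^{n}\,dx$, we get $\int(\tilde\rho-1)\,d\tilde x=\frac{s^{n}}{c}\int(\rho-c)\,dx=0$, so $(\tilde\rho,\tilde\vu)$ is an admissible state for the normalized problem.

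First I would substitute these variables into the continuity equation. Using $\partial_t=s\,\partial_{\tilde t}$ and $\nabla_x=s\nabla_{\tilde x}$, equation (\ref{EP10}) becomes, after dividing by the common factor $cs$, exactly $\partial_{\tilde t}\tilde\rho+\nc(\tilde\rho\tilde\vu)=0$, which is (\ref{EP1}). This computation is also where one sees that the velocity must \emph{not} be rescaled: any factor multiplying $\tilde\vu$ would survive in this equation and spoil the match.

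The substantive step is the momentum equation. Substituting into (\ref{EP20}), the material-derivative left-hand side $\vu_t+\vu\cn\vu$ acquires an overall factor $s$, namely $s(\partial_{\tilde t}\tilde\vu+\tilde\vu\cdot\nabla_{\tilde x}\tilde\vu)$, while the forcing $k\nabla_x\Delta_x^{-1}(\rho-c)$ becomes $kcs^{-1}\nabla_{\tilde x}\Delta_{\tilde x}^{-1}(\tilde\rho-1)$, using $\rho-c=c(\tilde\rho-1)$, $\Delta_x^{-1}=s^{-2}\Delta_{\tilde x}^{-1}$ and $\nabla_x=s\nabla_{\tilde x}$. Dividing through by $s$ leaves the forcing coefficient $kc/s^{2}$, and the normalization $s^{2}=-kc$ is precisely what pins this coefficient to $-1$, so that one recovers (\ref{EP2}). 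Hence $(\tilde\rho,\tilde\vu)$ solves the unit-free attractive EP system. The only real care required here -- the ``obstacle'' in an otherwise routine computation -- is the bookkeeping of the powers of $s$ attached to $\partial_t$, $\nabla$ and $\Delta^{-1}$, so that the single choice $s=\sqrt{-kc}$ simultaneously cancels all dimensional factors in both equations.

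Finally I would translate the threshold hypothesis. The scalings give $\nabla_{\tilde x}\cdot\tilde\vu_0=s^{-1}\nc\vu_0$ and $\nabla_{\tilde x}\times\tilde\vu_0=s^{-1}\nabla_x\times\vu_0$, so the vanishing-vorticity condition is preserved; moreover $\tilde\rho_0=\rho_0/c$ preserves positivity and, since $c>0$, satisfies $sgn(\tilde\rho_0-1)=sgn(\rho_0-c)$. Therefore the Main-Theorem hypothesis $\nabla_{\tilde x}\cdot\tilde\vu_0(\bar x)<sgn(\tilde\rho_0-1)\sqrt{nF(\tilde\rho_0)}$ becomes, after multiplying through by $s>0$, exactly the sup-critical condition (\ref{eq:scc}) with $F(\cdot)$ as in (\ref{eq:scb}). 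Applying the Main Theorem in the tilde variables then yields loss of $C^1$ regularity at some finite $\tilde t_c$; undoing the time scaling gives $t_c=\tilde t_c/s<\infty$, and since $\nc\vu=s\,\nabla_{\tilde x}\cdot\tilde\vu$ and $\rho=c\tilde\rho$ differ from the tilde quantities only by the positive constants $s$ and $c$, the blow-up conclusions $\min_x\nc\vu(t,x)\to-\infty$ and $\max_x\rho(t,x)\to\infty$ as $t\uparrow t_c$ transfer verbatim, completing the proof.
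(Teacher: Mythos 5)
Your proposal is correct and is exactly the argument the paper intends: the paper justifies Corollary \ref{Coro} by the brief remark that rescaling $\rho\mapsto\rho/c$, $x\mapsto\sqrt{-kc}\,x$, $t\mapsto\sqrt{-kc}\,t$ reduces (\ref{EP10}), (\ref{EP20}) to the unit-free system, and you have simply carried out that same rescaling in full detail, correctly tracking the powers of $s=\sqrt{-kc}$ and the translation of the threshold condition.
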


The concept of Critical Threshold and associated methodology is originated and developed in a series of papers by Engelberg, Liu and Tadmor \cite{ELT:EPstar}, Liu and Tadmor \cite{LiTa:restricted,LiTa:spectral} and more. It first appears in \cite{ELT:EPstar} regarding pointwise criteria for $C^1$ solution regularity of 1D EP system. The key argument in that paper is based on the convective derivative along particle paths $'=\pa_t+\vu\cn$.
It makes possible to obtain  a 2-by-2 ODE system for $u_x$ and $\rho$ along particle paths --- the so-called Lagrangian formulation. Phase plane analysis is then employed to study the finiteness of the ODE solutions and therefore $C^1$ regularity of the PDE solution. Similar results stay valid for Euler-Poisson systems with geometric symmetry in higher dimensions 
\cite{BrWi:Gra,LiTa:spectral}. To treat genuinely multi-D cases, Liu and Tadmor introduce in \cite{LiTa:spectral} the method of spectral dynamics  which relies on the ODE system governing eigenvalues of $$M:=\nabla\vu,$$ which is the velocity gradient matrix, along particle paths. They identify if-and-only-if, pointwise conditions for global existence of $C^1$ solutions to {\it restricted} Euler-Poisson systems. Chae and Tadmor \cite{ChaeTa:star} further extend the Critical Threshold argument to multi-D full Euler-Poisson systems (\ref{EP1}), (\ref{EP2}) with attractive forcing $k<0$. Their result, however, offers a blow-up region $\nabla\times\vu_0=0,\,\nc\vu_0<-\sqrt{-nkc} $ which  is only a subset of the blow-up region  in (\ref{eq:scc}). This subset is  to the left of the solid line $d\leq d^{-}:=-\sqrt{-nkc}$ depicted in figure  \ref{fig1}. Finally, a recent paper by Tadmor and Wei \cite{TaWe:EP} reveals the critical threshold phenomena in 1D Euler-Poisson system with pressure.

When tracking other results on well-posedness of Euler-Poisson equations, we find them commonly relying on (the vast family of) energy methods  and thus fundamentally differ from our pointwise results obtained via the Lagrangian approach. With repulsive force $k>0$, we refer to \cite{Guo:EP, CoGr:EP} for global existence of classical solutions with small data and \cite{Perthame:EP} for nonexistence of global solutions. With attractive force $k<0$, see \cite{Makino:star1} for local regularity of classical solutions and \cite{Makino:star2,MaPe:EPnon} for nonexistence results. Discussion on weak solutions of Euler-Poisson systems can be found in e.g. \cite{Zhang:EP, MaNa:EP, PRV:EP}. We also refer to \cite{Gamba:EP, DeMa:EP,LuSm:whitedwarf, LuSm:star, Rein:2003:star} and references therein for steady-state solutions. Study of Euler-Poisson system with damping relaxation can be found in e.g. \cite{Wang:EP, ChWa:EP, LNX:EP}.

The rest of this paper is organized as following. In Section \ref{sec:spec}, we follow the idea of \cite{ChaeTa:star} to derive along particle paths an ODE system governing the dynamics of eigenvalues for $S:=\displaystyle{1\over2}(M+M^\top)$. This is a variation of the spectral dynamics for $M$ introduced in \cite{LiTa:spectral}. We then derive in Section \ref{sec:comp} a closed $2\times 2$ ODE system (\ref{eqs:dd}) at the cost of turning one equation into inequality. By the comparison principle, this inequality is in favor of blow-up. Thus, with the inequality sign being replaced with equality sign, a modified ODE system is used to yield sub-solutions and to study blow-up scenario for the original system. Section \ref{sec:stab}, devoted to the modified system, reveals the Critical Threshold for such a system. Consequently, a pointwise blow-up condition for the original system is identified.

\section{Spectral dynamics}\label{sec:spec}

We examine the gradient matrix $M=\nabla\vu$ and its symmetric part, $S={1\over2}\left(\nabla\vu+(\nabla\vu)^\top\right)$.
Both matrices are used to study the spectral dynamics of Euler systems (see e.g. \cite{LiTa:spectral} for $M$ and \cite{ChaeTa:star} for $S$). The relation between the spectra of $M$ and $S$ is described in the following.

\begin{proposition}\label{eig:prop}Let $\{\lambdaM\}$ denote the eigenvalues of $M$ and $\{\lambdaS\}$ for $S$. Then
\begin{align}\label{sum1}
\sum_{\lambdaM}\lambdaM&=\sum_{\lambdaS}\lambdaS=\nc\vu,\\
\label{sum2}\sum_{\lambdaM}\lambdaM^2&=\sum_{\lambdaS}\lambdaS^2-{1\over2}|{\bf \omega}|^2.
\end{align}
Here, $\omega$  is the ${n(n-1)\over2}$ \emph{vorticity} vector which consists of  the off-diagonal entries of $A:={1\over2}\left(\nabla\vu-(\nabla\vu)^\top\right)$.
\end{proposition}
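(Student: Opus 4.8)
The plan is to prove the two identities by decomposing $M=\nabla\vu$ into its symmetric and antisymmetric parts, $M=S+A$, and exploiting the fact that the trace and the Frobenius norm (the sum of squared eigenvalues, via $\sum\lambda^2 = \mathrm{tr}(M^2)$ only when $M$ is diagonalizable — here I must be careful, see below) are basis-independent spectral invariants. The first identity \eqref{sum1} is the easy one: both $\sum_{\lambdaM}\lambdaM$ and $\sum_{\lambdaS}\lambdaS$ equal the respective traces, and since $A$ is antisymmetric it has zero diagonal, so $\mathrm{tr}(M)=\mathrm{tr}(S)+\mathrm{tr}(A)=\mathrm{tr}(S)=\nc\vu$. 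This requires only that the trace equals the sum of eigenvalues, which holds for any square matrix over $\mathbb{C}$ counted with algebraic multiplicity.

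\medskip

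For the second identity \eqref{sum2}, the approach is to relate $\sum\lambda^2$ to the Frobenius norm. The clean fact is $\mathrm{tr}(M^2) = \sum_{\lambdaM}\lambdaM^2$ (sum of squared eigenvalues, by considering the trace of $M^2$ whose eigenvalues are $\lambdaM^2$), and similarly $\mathrm{tr}(S^2)=\sum_{\lambdaS}\lambdaS^2$ — the latter genuinely equals $\|S\|_F^2=\sum_{ij}S_{ij}^2$ because $S$ is real symmetric hence orthogonally diagonalizable with real eigenvalues. So I would first expand
\[
\mathrm{tr}(M^2)=\mathrm{tr}\big((S+A)^2\big)=\mathrm{tr}(S^2)+\mathrm{tr}(A^2)+2\,\mathrm{tr}(SA).
\]
The cross term vanishes: $\mathrm{tr}(SA)=0$ because $SA$ is the product of a symmetric and an antisymmetric matrix, so $\mathrm{tr}(SA)=\mathrm{tr}\big((SA)^\top\big)=\mathrm{tr}(A^\top S^\top)=\mathrm{tr}(-AS)=-\mathrm{tr}(AS)=-\mathrm{tr}(SA)$, forcing it to be zero. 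It remains to identify $\mathrm{tr}(A^2)$ with $-\tfrac12|\omega|^2$: since $A$ is antisymmetric, $\mathrm{tr}(A^2)=\mathrm{tr}(-A^\top A)=-\sum_{ij}A_{ij}^2=-\|A\|_F^2$, and because the off-diagonal entries come in $\pm$ pairs, $\|A\|_F^2=2\sum_{i<j}A_{ij}^2=|\omega|^2$ with the stated normalization of the vorticity vector. Assembling these gives $\sum_{\lambdaM}\lambdaM^2=\sum_{\lambdaS}\lambdaS^2-\tfrac12|\omega|^2$, as claimed.

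\medskip

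\textbf{The main obstacle} is a subtle point about $\sum_{\lambdaM}\lambdaM^2=\mathrm{tr}(M^2)$: this is always true as an identity over $\mathbb{C}$ (the eigenvalues of $M^2$ are the squares of those of $M$, counted with algebraic multiplicity, regardless of diagonalizability), so no regularity hypothesis on $M$ is needed — but one must state that eigenvalues are counted with multiplicity and may be complex, whereas the $\lambdaS$ are automatically real. I would therefore make explicit at the outset that sums $\sum_{\lambdaM}$ range over the $n$ complex eigenvalues with algebraic multiplicity, so that the purely algebraic trace identities apply without any appeal to diagonalizability of the generally non-normal matrix $M$. The vorticity bookkeeping (confirming the factor $\tfrac12$ matches the convention that $\omega$ collects the $\tfrac{n(n-1)}2$ independent off-diagonal entries of $A$) is routine and I would verify it once to fix the normalization, but it presents no conceptual difficulty.
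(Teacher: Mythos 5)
Your argument follows essentially the same route as the paper's: decompose $M=S+A$, use that the skew\-/symmetric part is traceless to get (\ref{sum1}), then expand $\mathrm{tr}(M^2)$ and kill the cross terms to get (\ref{sum2}) (the paper observes that $AS+SA$ is skew-symmetric hence traceless; your transpose/cyclicity argument for $\mathrm{tr}(SA)=0$ is equivalent). Your explicit remark that $\sum_{\lambdaM}\lambdaM^2=\mathrm{tr}(M^2)$ holds for the generally non-normal matrix $M$ over $\mathbb{C}$ with algebraic multiplicities, with no diagonalizability needed, is correct and slightly more careful than what the paper states.

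The one concrete slip is in the final normalization, where your own chain of equalities is inconsistent: you compute $\mathrm{tr}(A^2)=-\|A\|_F^2$ and assert $\|A\|_F^2=2\sum_{i<j}A_{ij}^2=|\omega|^2$, which gives $\mathrm{tr}(A^2)=-|\omega|^2$, yet you then conclude with the claimed $-\tfrac12|\omega|^2$; these differ by a factor of $2$. The factor $\tfrac12$ in (\ref{sum2}) is the one consistent with $\omega$ being the curl, i.e.\ the vector whose $\tfrac{n(n-1)}{2}$ independent components are $\partial_j u_i-\partial_i u_j=2A_{ij}$ --- and this is how the paper actually uses $\omega$ later, writing $\omega\mapsto\nabla\times\vu$. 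With that convention $|\omega|^2=4\sum_{i<j}A_{ij}^2=2\|A\|_F^2$, so $\mathrm{tr}(A^2)=-\|A\|_F^2=-\tfrac12|\omega|^2$ as required. (Under the literal reading that $\omega$ collects the entries of $A$ itself, one would instead get $-2|\omega|^2$, so the proposition's wording is itself loose here.) The normalization check you deferred as ``routine'' is precisely the step that was left unresolved; pinning $\omega$ to $\nabla\times\vu$ closes it.
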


\begin{proof} Use identity $M=S+A$ and the skew-symmetry of $A$,
\[
\sum_{\lambdaM}\lambdaM=tr(M)=tr(S+A)=tr(S)=\sum_{\lambdaS}\lambdaS.
\]
Squaring the last identity we have $M^2=S^2+A^2+AS+SA$ and therefore,
\begin{align*}
\sum_{\lambdaM}\lambdaM^2&=tr(M^2)=tr(S^2+A^2+AS+SA)=\sum_{\lambdaS}\lambdaS^2+tr(A^2).
\end{align*}
Note that $AS+SA$ is skew-symmetric and thus traceless. 
A simple calculation yields $tr(A^2)=-{1\over2}|\omega|^2$. 
\end{proof}


Following \cite{LiTa:spectral}, we turn to study the dynamics of $M$ along particle paths. Take the gradient of (\ref{EP2}) to find
\begin{equation}\label{eq:M}
M'+M^2\equiv M_t+u\cn M+M^2=-R(\rho-1),
\end{equation}
where $R$ stands for the \emph{Riesz  matrix}, $R=\{R_{ij}\}:=\{\pa_{x_ix_j}\Delta^{-1}\}$. 

The trace of (\ref{eq:M}) then yields that the divergence, $d:=tr(M)$, is governed by
\[
d'=-\sum_{\lambdaM}\lambdaM^2-(\rho-1),
\]
and in view of (\ref{sum2}),
\begin{equation}\label{eq:domega}
d'=-\sum_{\lambdaS}\lambdaS^2 +\frac{1}{2}|\omega|^2 -(\rho-1).
\end{equation}

We now make the first observation regarding the invariance of the vorticity $\omega$:
taking the skew-symmetric part of the $M$-equation (\ref{eq:M}),
\begin{equation}\label{Aeq}
A'+AS+SA=0.
\end{equation}
It follows that if the initial vorticity vanishes, 
$\omega(\bar{x}) \mapsto \nabla \times \vu(\bar{x})=0 $, then by (\ref{Aeq}), $\omega\mapsto \nabla \times \vu$ vanishes along the particle path which emanates from $\bar{x}$. This allows us to decouple the vorticity and divergence dynamics, and (\ref{eq:domega}) implies

\begin{equation}\label{eq:deq}
d'=-\sum_{\lambdaS}\lambdaS^2-(\rho-1), \qquad \nabla\times \vu=0.
\end{equation}
Finally, we use  Cauchy-Schwartz  $\displaystyle \sum\lambdaS^2\le {1\over n}\left(\sum\lambdaS\right)^2={1\over n}d^2$ and the fact that all $\lambdaS$ are real (due to the symmetry of $S$), to deduce the \emph{inequality},
\begin{subequations}\label{eqs:deq}
\begin{equation}\label{dineq}
d'\le-{1\over n}d^2-(\rho-1).
\end{equation}
This, together with the mass equation (\ref{EP1}) which can be written along particle path
\begin{equation}\label{eq:mass}
\rho'=-d\rho,
\end{equation}
\end{subequations}
give us the desired closed system which governs  $(\rho,d)$ along particle paths.

\begin{remark}
The approach pursued in this paper will be based on the \emph{inequality} (\ref{dineq}) and is therefore limited to derivation of a finite time breakdown. To argue the global regularity, one needs to study the underlying \emph{equality} (\ref{eq:deq}), and to this end, to study the trace  $\sum \lambdaS^2$. In the two-dimensional case, for example, one can use
$\sum \lambdaS^2 = d^2/2+\eta^2/2$ to replace (\ref{dineq}) with
\[
d'= - {1\over 2}d^2-{1\over 2}\eta^2-(\rho-1), \qquad \eta:=\lambda_{S,2}-\lambda_{S,1}.
\]
In this framework, global 2D regularity is dictated by  the dynamics  of the \emph{spectral gap}, $\eta=\lambda_{S,2}-\lambda_{S,1}$, which in turn requires the dynamics of the Riesz transform $R(\rho-1)$. 
\end{remark}

\section{A comparison principle with a majorant system}\label{sec:comp}

The blow-up analysis, driven by the inequalities (\ref{eqs:deq}), 
\begin{subequations}\label{eqs:dd}
\begin{align}\label{eq:dda}
d'&\le-{1\over n}d^2-(\rho-1),\\ \label{eq:ddb}\rho'&=-d\rho.
\end{align}
\end{subequations}
is carried out by standard comparison with the majorant system
\begin{subequations}\label{eqs:ee}
\begin{align}\label{eq:eea}
e'&=-{1\over n}e^2-(\zeta-1),\\ \label{eq:eeb}
\zeta'&=-e\zeta.
\end{align}
\end{subequations}

The following proposition guarantees the monotonicity of the solution operator associated with (\ref{eqs:dd}).

\begin{lemma}\label{comp:lemma}
The following monotone relation between system (\ref{eqs:dd}) and system (\ref{eqs:ee}) is invariant forward in time,
\begin{equation}\label{comp1}
\left\{\begin{array}{l}
d(0) < e(0) \\
0<\zeta(0)<\rho(0)
\end{array}\right. \mbox{ implies }\ 
\left\{
\begin{array}{l}
d(t) < e(t)\\
0<\zeta(t)<\rho(t)
\end{array}\right.
\mbox{ for }t\ge 0,
\end{equation}
as long as all solutions remain finite on time interval $[0,t]$.
\end{lemma}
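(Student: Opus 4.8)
The plan is to reduce the whole statement to forward invariance of the positive quadrant for a single \emph{cooperative} linear differential inequality in the error variables $p:=e-d$ and $q:=\rho-\zeta$. First I would record that positivity of the densities propagates for free: integrating the multiplicative equations (\ref{eq:ddb}) and (\ref{eq:eeb}) gives $\rho(t)=\rho(0)\exp(-\int_0^t d\,ds)>0$ and likewise $\zeta(t)>0$ as long as the solutions are finite, so the hypotheses $\rho(0)>0$, $\zeta(0)>0$ are preserved automatically and, crucially, the coupling coefficient $\rho$ below is strictly positive.

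Next I would subtract the two systems. Combining the inequality (\ref{eq:dda}) with the equation (\ref{eq:eea}) yields $p'=e'-d'\ge -\tfrac1n(e+d)p+q$, where the cross term $q$ enters with coefficient $+1$; combining the mass equations (\ref{eq:ddb}), (\ref{eq:eeb}) and adding and subtracting $e\rho$ gives the exact identity $q'=\rho\,p-e\,q$, where $p$ enters with coefficient $\rho>0$. Thus $(p,q)$ obeys a linear differential inequality whose off-diagonal coefficients are nonnegative --- the Kamke--M\"{u}ller (cooperativity) condition --- with strictly positive initial data $p(0)>0$, $q(0)>0$.

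The core step is to prove that the positive quadrant is forward invariant for this cooperative system on any compact subinterval $[0,T]$ on which $d,\rho,e,\zeta$ all remain finite. On such an interval the coefficients $\tfrac1n(e+d)$, $e$, $\rho$ are bounded, so I can choose $\mu>0$ large enough that $\mu-\tfrac1n(e+d)\ge 0$ and $\mu-e\ge 0$ throughout, and pass to $\hat p:=e^{\mu t}p$, $\hat q:=e^{\mu t}q$. Then $\hat p'$ and $\hat q'$ are each sums of nonnegative coefficients times $(\hat p,\hat q)$, so while $\hat p,\hat q\ge 0$ both are nondecreasing. A first-time argument then closes the proof: if $t_*\le T$ were the first time $\min(\hat p,\hat q)$ vanished, then $\hat p,\hat q>0$ on $[0,t_*)$ would force $\hat p'>0$ and $\hat q'>0$ there, whence by continuity $\hat p(t_*)\ge \hat p(0)>0$ and $\hat q(t_*)\ge \hat q(0)>0$, contradicting $\min(\hat p(t_*),\hat q(t_*))=0$. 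Hence $p,q>0$ on $[0,T]$, which is exactly $d<e$ and $0<\zeta<\rho$; letting $T$ exhaust the interval of finiteness gives (\ref{comp1}).

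The main obstacle I anticipate is the degenerate scenario in which both differences touch zero simultaneously: there the naive sign-of-derivative argument applied to $e-d$ or $\rho-\zeta$ in isolation yields only a non-strict conclusion. The exponential shift by $e^{\mu t}$ is precisely what resolves this, since by absorbing the possibly negative diagonal terms it upgrades ``nonnegative off-diagonal'' into ``both components monotone increasing while in the quadrant,'' so the simultaneous-touch case is handled on exactly the same footing as the generic ones. The only point requiring care is the boundedness of the coefficients, which is guaranteed by the standing assumption that the solutions remain finite on $[0,t]$.
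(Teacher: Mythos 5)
Your proof is correct, and it takes a genuinely different route from the paper's. The paper also argues by contradiction at a first crossing time $t_1$, but it treats the two components asymmetrically: it first integrates the density equations explicitly, $\zeta(t_1)=\zeta(0)\exp(-\int_0^{t_1}e)\,$ versus $\rho(t_1)=\rho(0)\exp(-\int_0^{t_1}d)$, to conclude that the ordering $\zeta<\rho$ cannot be the first to fail (since $e\ge d$ up to $t_1$), and is thereby reduced to the single scenario $e(t_1)=d(t_1)$, which it excludes by the sign clash between $(e-d)'(t_1)\le 0$ and the strictly positive right-hand side $-(\zeta(t_1)-\rho(t_1))>0$. You instead treat both error components $p=e-d$ and $q=\rho-\zeta$ symmetrically, observe the quasi-monotone (Kamke--M\"{u}ller) structure $p'\ge -\tfrac1n(e+d)p+q$, $q'=\rho p - e q$ with $\rho>0$, and use the exponential weight to absorb the diagonal terms and render both weighted components nondecreasing while in the positive quadrant. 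What your approach buys is robustness: it does not rely on the exact solvability of the $\zeta$ and $\rho$ equations, and it disposes of the degenerate simultaneous-touch case on the same footing as the others, which is precisely the case the paper has to rule out separately via the explicit formula. What the paper's approach buys is brevity --- the explicit exponentials make the density comparison a one-line computation and leave only one boundary scenario to check. Both arguments use the same two structural facts (the cooperative coupling signs and the positivity of $\rho$), so they are equally rigorous; yours is essentially the standard proof of the comparison theorem for cooperative systems specialized to this $2\times 2$ setting.
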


\begin{proof} Invariance of positivity of $\zeta$ is a direct consequence of (\ref{eq:eeb}) and finiteness of $e$. The rest can be proved by contradiction. Suppose $t_1$ is the earliest time when (\ref{comp1}) is violated. Then, 
\begin{equation}\label{Rr}
\zeta(t_1)=\zeta(0)\exp\left(-\int_0^{t_1}e(t)dt\right)<\rho(0)\exp\left(-\int_0^{t_1}d(t)dt\right)=\rho(t_1).
\end{equation}
Therefore, we are left with only one possibility $e(t_1)=d(t_1)$. However,  subtracting (\ref{eq:dda}) from (\ref{eq:eea}),
\begin{equation}\label{eq:abc}
(e-d)'\ge-{1\over n}(e^2-d^2)-(\zeta-\rho).
\end{equation}
Setting $t=t_1$ in the above inequality, we find that 
\[
LHS \mbox{ of } (\ref{eq:abc}) = \big(e(t_1)-d(t_1)\big)' \le 0,
\]
 since $e(t)-d(t)>0$ for all $t<t_1$;
but this contradicts the positivity of the expression on the right of (\ref{eq:abc}), for by (\ref{Rr})
\[
RHS \mbox{ of } (\ref{eq:abc}) =0-\left[\zeta(t_1)-\rho(t_1)\right]>0.
\]
\end{proof}

In the next section, we employ phase plane analysis on the modified system (\ref{eqs:ee}). When translated in terms of the original system (\ref{eqs:dd}), however, such analysis can only yield blow-up results and is insufficient for global existence results. In other words, estimate (\ref{comp1}) is only useful for proving $d\searrow-\infty$, the key mechanism for blow-up of $C^1$ solutions.

\section{Stability analysis of the majorant system}\label{sec:stab}
We shall prove the blow-up of the majorant system (\ref{eqs:ee}), $e(t) \rightarrow -\infty$ as $t\uparrow t_c$, which in turn,
by lemma \ref{comp:lemma} implies $d(t) \rightarrow -\infty$. Abusing notations, we express the majorant system in terms of the original variables $(e,\zeta) \mapsto (d,\rho)$: 
\begin{subequations}\label{eqs:dm}
\begin{align}\label{dm}
d'&=-{1\over n}d^2-(\rho-1),\\ 
\label{rm}\rho'&=-d\rho.
\end{align}
\end{subequations}

The (in-)stability analysis of (\ref{eqs:dm}) hinges on the path invariants of this system. To this end, we use the same
$q$-transformation  employed in \cite{LiTa:rotation,LiTa:restricted}: setting $q:=d^2$ and differentiate along the path
$\{(t,X(a,t))\ | \ X_t(a,t)=u(t,X(a,t)), X(a,0)=a\}$, we find
\[
\frac{dq}{d\rho}=2d\frac{d'}{\rho'}= \frac{2}{n\rho}q+2\left(1-\frac{1}{\rho}\right),
\]
which yields
\[
{d\over d\rho}\left(q\rho^{-{2\over n}}\right)=2(1-\rho^{-1})\rho^{-{2\over n}}.
\]
Upon integration, we arrive at the  following key observation.
\begin{lemma}
The majorant system (\ref{eqs:dm}) is equipped with the path invariant, 
\[
I(d(t),\rho(t))=I(d_0,\rho_0),
\]
along each path $(t,x(t))$ initiated with a non-vacuum state $(d_0, \rho_0>0)$. Here,
\begin{equation}\label{II}
I(d,\rho):=d^2\rho^{-{2\over n}}-2\int_1^\rho(1-r^{-1})r^{-{2\over n}}\,dr=\rho^{-{2\over n}}\left(d^2-nF(\rho)\right),
\end{equation}
where $F(\cdot)$ is specified in (\ref{eq:scb}).
\end{lemma}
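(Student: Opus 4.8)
The plan is to read this Lemma as an integration of the exact-differential relation $\frac{d}{d\rho}\bigl(q\rho^{-2/n}\bigr)=2(1-\rho^{-1})\rho^{-2/n}$ derived immediately above it (with $q=d^2$), followed by a closed-form evaluation of the resulting integral that produces the function $F$ of (\ref{eq:scb}). First I would establish the invariance: integrating that relation in $\rho$ along a path from the initial state $\rho_0$ to the current $\rho(t)$ gives
\[
d(t)^2\rho(t)^{-2/n}-d_0^{\,2}\rho_0^{-2/n}=2\int_{\rho_0}^{\rho(t)}(1-r^{-1})r^{-2/n}\,dr,
\]
and shifting the lower endpoint to the normalization $r=1$ (a harmless constant absorbed on both sides) rewrites this as $I(d(t),\rho(t))=I(d_0,\rho_0)$ with $I$ as in the first expression of (\ref{II}). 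This already delivers the claimed path invariant.

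The one point needing care is that the substitution $\frac{dq}{d\rho}=2d\,d'/\rho'$ divides by $\rho'=-d\rho$, which vanishes wherever the path crosses $d=0$; since $\rho$ stays positive by (\ref{rm}), such crossings are exactly the critical points of $\rho$, so the $\rho$-parametrization is not globally legitimate. To make the conservation law valid on the entire path I would instead verify $\frac{d}{dt}I(d(t),\rho(t))\equiv0$ directly. Writing $I=\rho^{-2/n}\bigl(d^2-nF(\rho)\bigr)$ and differentiating along the path, I substitute $d'=-\tfrac1n d^2-(\rho-1)$ and $\rho'=-d\rho$ from (\ref{dm})--(\ref{rm}); the cubic $d^3$-terms cancel, and after factoring out $d\rho^{-2/n}$ the whole expression reduces to the single algebraic identity
\[
n\rho F'(\rho)-2F(\rho)=2(\rho-1).
\]
This is a first-order linear ODE for $F$, and one checks that (\ref{eq:scb}) is precisely its solution subject to $F(1)=0$; hence $I$ is genuinely constant in $t$, regardless of where $d$ vanishes.

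It remains to confirm the second equality in (\ref{II}), namely $2\int_1^\rho(1-r^{-1})r^{-2/n}\,dr=nF(\rho)\rho^{-2/n}$, which I would obtain by direct integration after writing the integrand as $r^{-2/n}-r^{-1-2/n}$. For $n\ne2$ both exponents are handled by the power rule, and matching the accumulated constant term against $-\tfrac{n^2}{n-2}$ reproduces the first branch of $F$; for $n=2$ the term $r^{-2/n}=r^{-1}$ integrates to a logarithm, which is exactly the source of the $\rho\ln\rho$ appearing in the second branch. Equivalently, one may integrate the ODE of the previous paragraph in the form $\frac{d}{d\rho}\bigl(F\rho^{-2/n}\bigr)=\tfrac2n(1-r^{-1})r^{-2/n}\big|_{r=\rho}$, which yields the identity with no case analysis at all. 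The only genuine obstacle is the parametrization subtlety addressed above; the two integral evaluations are routine.
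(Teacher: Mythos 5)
Your proposal is correct, and its first half is exactly the paper's proof: the lemma is stated as an immediate consequence of the $q$-transformation $q=d^2$, the exact differential ${d\over d\rho}\bigl(q\rho^{-2/n}\bigr)=2(1-\rho^{-1})\rho^{-2/n}$, and integration from $1$ to $\rho$, with the closed-form evaluation of $\int_1^\rho(1-r^{-1})r^{-2/n}\,dr$ producing the two branches of $F$ in (\ref{eq:scb}). Where you go beyond the paper is in flagging that $\frac{dq}{d\rho}=2d\,d'/\rho'$ divides by $\rho'=-d\rho$, which vanishes at crossings of $d=0$, and in repairing this by verifying $\frac{d}{dt}I\equiv 0$ directly; your reduction to the identity $n\rho F'(\rho)-2F(\rho)=2(\rho-1)$ checks out against both branches of (\ref{eq:scb}), so the invariance holds with no parametrization caveat. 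The paper leaves this point implicit (one can also argue by continuity, since the conserved quantity is constant on each maximal arc where $d\ne 0$ and $I$ is continuous), so your extra step buys genuine rigor at essentially no cost; otherwise the two arguments are the same.
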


It is simple calculation to show that the majorant system (\ref{eqs:dm}) admits three distinct critical points (see figure \ref{fig1}):
\[
(d^*,\rho^*):=(0,1),\quad(d^\pm,\rho^{\pm}):=(\pm\sqrt{n},0).
\]
and that $(0,1)$ is a saddle point, $(-\sqrt{n},0)$ a nodal source and $(\sqrt{n},0)$ a nodal sink. The separatrix is given by the zero level set $I(d,\rho)=0$. Moreover, the right branch of the separatrix, $d=\sqrt{nF(\rho)}$ connects critical points $(0,1)$ and $(\sqrt{n},0)$ while the left branch,  $d=-\sqrt{nF(\rho)}$ connects $(0,1)$ and $(-\sqrt{n},0)$. 

By inspection of the phase plane in figure \ref{fig1}, we postulate the following invariant region of finite-time blow-up for the modified system (\ref{eqs:dm}),
\begin{subequations}\label{eqs:Omega}
\begin{equation}\label{Omega}
\Omega=\Omega_1\cup\Omega_2=\{(d,\rho)\;|\;d<sgn(\rho-1)\sqrt{nF(\rho)}\}
\end{equation}
where
\begin{eqnarray}\label{eq:Omega1}
\Omega_1&:=&\{(d,\rho)\;|\;I(d,\rho)>0 \mbox{ and }d<0\mbox{ and }\rho>0\},\\
\label{eq:Omega2}
\Omega_2&:=&\{(d,\rho)\;|\;I(d,\rho)<0 \mbox{ and }\rho>1\}.
\end{eqnarray}

\end{subequations}

\begin{figure}[htbp]
\begin{center}\includegraphics[height=200pt,width=400pt]{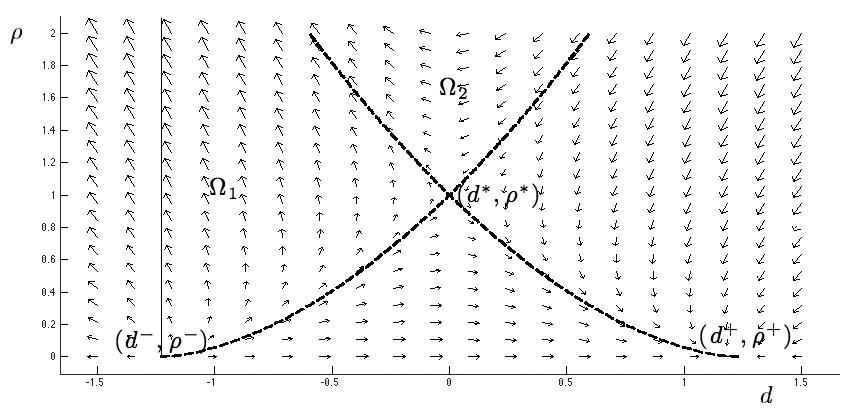}\end{center}
\caption{Phase plane of (\ref{eqs:dm}) with blow-up region $\Omega_1\cup\Omega_2$ which extends the Chae-Tadmor region \cite{ChaeTa:star} $d\leq d^{-}$.}
\label{fig1}
\end{figure}

\begin{lemma}\label{Omega:lemma}
Consider the modified system (\ref{eqs:dm}), equipped with initial data $(d_0,\rho_0)$. If $(d_0,\rho_0)\in\Omega$, then $\nc\vu\rightarrow-\infty$ and $\rho\to\infty$ at a finite time.
\end{lemma}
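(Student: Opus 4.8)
The plan is to exploit the path invariant $I$ furnished by the preceding lemma, which confines each trajectory of (\ref{eqs:dm}) to a single level curve $I(d,\rho)=I_0:=I(d_0,\rho_0)$, and thereby to reduce the blow-up question to the convergence of a scalar integral. Writing the invariant (\ref{II}) as $d^2=nF(\rho)+I_0\,\rho^{2/n}$, I would first record the elementary properties of $F$ that drive everything. Since $F(1)=0$ and $F'(\rho)=\tfrac{2}{n-2}\bigl(1-\rho^{2/n-1}\bigr)$ for $n\ne2$ (with the analogous $F'(\rho)=\ln\rho$ for $n=2$), the function $F$ is strictly decreasing on $(0,1)$ and strictly increasing on $(1,\infty)$; hence $F(\rho)\ge0$ with equality only at $\rho=1$. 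Combined with $F\ge0$, this shows that the sign of $I_0$ distinguishes the two pieces of $\Omega$, namely $I_0>0$ on $\Omega_1$ and $I_0<0$ on $\Omega_2$. Because $I$ is conserved, a trajectory cannot cross the separatrix $\{I=0\}$, so it stays within its own component of $\Omega$: this is the invariance of $\Omega$ alluded to in the Main Theorem.

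Next I would show that on every such trajectory $d$ becomes and remains negative while $\rho$ increases without bound. On $\Omega_1$ this is immediate: since $I_0>0$ and $F\ge0$ we have $d^2\ge I_0\rho^{2/n}>0$, so $d$ never vanishes and retains its initial sign $d_0<0$; then (\ref{rm}) gives $\rho'=-d\rho>0$, so $\rho$ is monotone increasing. On $\Omega_2$ the relevant function is $g(\rho):=nF(\rho)+I_0\rho^{2/n}=nF(\rho)-|I_0|\rho^{2/n}$, with $g(1)=-|I_0|<0$, so the level curve avoids $\rho=1$. The crucial point is that $h(\rho):=nF(\rho)\rho^{-2/n}$ is strictly increasing for $\rho>1$, since the computation preceding (\ref{II}) gives $h'(\rho)=2(1-\rho^{-1})\rho^{-2/n}>0$ there. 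Thus $g$ has a single zero $\rho_*>1$ with $g>0$ for $\rho>\rho_*$, the level curve consists of the two symmetric arms $d=\pm\sqrt{g(\rho)}$ meeting at the vertex $(0,\rho_*)$, and necessarily $\rho_0\ge\rho_*$. If $d_0\le0$ the trajectory already lies on the left arm; if $d_0>0$ it descends the right arm ($\rho'=-d\rho<0$) and reaches the vertex in finite time (the apparent singularity of $1/\sqrt{g}$ at $\rho_*$ is integrable because $g'(\rho_*)=\rho_*^{2/n}h'(\rho_*)>0$ is a simple zero), whereupon $d'=-(\rho_*-1)<0$ carries it onto the left arm.

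Finally, on the branch $d=-\sqrt{nF(\rho)+I_0\rho^{2/n}}<0$, equation (\ref{rm}) becomes the autonomous scalar ODE $\rho'=\rho\sqrt{nF(\rho)+I_0\rho^{2/n}}$, so the blow-up time is
\[
T=\int_{\rho_*}^{\infty}\frac{d\rho}{\rho\sqrt{nF(\rho)+I_0\rho^{2/n}}}.
\]
I would bound the tail using the growth of $F$: for $n\ge3$, $F(\rho)\sim\tfrac{2\rho}{n-2}$ yields an integrand $\ls\rho^{-3/2}$; for $n=2$, $F(\rho)\sim\rho\ln\rho$ yields $\ls\rho^{-3/2}(\ln\rho)^{-1/2}$; and for $n=1$, where $F(\rho)=(\rho-1)^2$ and $|I_0|=\rho_0^{-2}\bigl((\rho_0-1)^2-d_0^2\bigr)<1$, the integrand is $\ls\rho^{-2}$. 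All three are convergent at infinity, so $\rho\to\infty$ in finite time, and along the branch $d=-\sqrt{nF(\rho)+I_0\rho^{2/n}}\to-\infty$; that is, $\nc\vu\to-\infty$ as well.

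The main obstacle is the $\Omega_2$ analysis. In $\Omega_1$ the blow-up is transparent because $d$ is trapped on one side of zero and the motion is monotone. In $\Omega_2$, by contrast, $d$ may start positive, so one must (i) rule out the trajectory being confined to a bounded component of its level set, and (ii) verify the finite-time passage through the turning point $(0,\rho_*)$. Both hinge on the single-crossing property of the level curve, which I would obtain from the strict monotonicity of $h(\rho)$ on $(1,\infty)$; establishing and correctly exploiting this monotonicity is the technical heart of the argument.
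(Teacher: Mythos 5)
Your proposal is correct, and in the $\Omega_2$ case it takes a genuinely different route from the paper. Both arguments rest on the same two pillars — the conserved quantity $I$ and the convexity/positivity of $F$ — and your $\Omega_1$ analysis (trapping $d<0$ via $d^2\ge I_0\rho^{2/n}$, then driving $\rho$ up) matches the paper's, which however finishes more cheaply with the Riccati inequality $\rho'\ge \sqrt{I}\,\rho^{1+1/n}$ rather than an exact quadrature. The real divergence is in $\Omega_2$: the paper never follows the trajectory around the turning point. Instead it combines the elementary bound $F(\rho)\le \frac{2}{n}\rho^{2/n}(\rho-1)$ for $\rho>1$ with the invariant to extract the \emph{uniform} lower bound $\rho-1\ge -I/2>0$, which turns (\ref{dm}) into the Riccati inequality $d'\le -d^2/n+I/2$ with strictly negative constant term; this forces $d\to-\infty$ in finite time regardless of the sign of $d_0$, in two lines and uniformly in $n$. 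Your version instead parametrizes the level curve as $d=\pm\sqrt{nF(\rho)+I_0\rho^{2/n}}$, proves the single-crossing property via the monotonicity of $h(\rho)=nF(\rho)\rho^{-2/n}$, checks integrability of $1/\sqrt{g}$ at the simple zero $\rho_*$ to get finite-time passage through the vertex, and then verifies convergence of $\int^\infty \frac{d\rho}{\rho\sqrt{g(\rho)}}$ by dimension-dependent asymptotics of $F$ (including the borderline $n=1$ case, where your observation $|I_0|<1$ is indeed the needed ingredient). This buys an explicit blow-up time as a convergent integral and a complete geometric picture of the orbit, at the cost of case analysis in $n$ and the extra care at the turning point; the only cosmetic slip is that the lower limit of your blow-up integral should be $\rho_0$ rather than $\rho_*$ when the trajectory starts on the left arm (in particular throughout $\Omega_1$). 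Everything you assert checks out, so this is a valid, if longer, alternative proof.
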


\begin{proof} 
We begin by recalling (\ref{eq:scb}), consult (\ref{II}),
\[
F(\rho)=\frac{2}{n}\rho^{2\over n}\int_1^{\rho}(1-r^{-1})r^{-{2\over n}}\,dr.
\]
Clearly, $F(1)=F'(1)=0$ and a simple calculation shows that 
$F''(\rho)=\dfrac{2}{n}\rho^{\frac{2}{n}-2}$, which implies that $F(\rho)$ is a strictly convex function of positive $\rho$ and attains its only minimum at $\rho=1$, 
\begin{equation}\label{est:1}
F(\rho) \ge F(1)=0.
\end{equation}

We shall also utilize the invariance of (\ref{II}) 
\begin{equation}\label{dFrho}
d^2-nF(\rho)=\rho^{2\over n}I_0, \qquad I_0=I(d_0,\rho_0).
\end{equation}

We now turn to discuss the two possible blow-up scenarios, depending whether the initial data $(d_0,\rho_0)$ belong to the blow-up regions  $\Omega_1$ or $\Omega_2$  given in (\ref{eqs:Omega}).
\medskip

\noindent
{\it Case \#1}. Assume that $(d_0,\rho_0)\in\Omega_1$ so that the invariant $I$ remains a \emph{positive} constant\[I>0.\] In this case, $d$ remains negative, for otherwise, setting $d=0$ in (\ref{dFrho}) would result in $F(\rho)=-\rho^{2\over n}I/n<0$, violating (\ref{est:1}). Thus, (\ref{dFrho}) and (\ref{est:1}) yield an upper bound,
\begin{equation*}\label{d:bound}
d\le-\rho^{1\over n}\sqrt{I}.
\end{equation*}
Then, by (\ref{rm}), we have a Riccati type of equation $\rho'\ge\sqrt{I}\rho^{1+{1\over n}}$ for which the solution exhibits blow-up $\rho\rightarrow+\infty$ and the divergence $d=\nc\vu$  approaches $-\infty$ at a  finite time due to (\ref{dFrho}).
\medskip

\noindent
{\it Case \#2}. Assume that $(d_0,\rho_0)\in\Omega_2$ so that the invariant $I$ remains  a \emph{negative} constant\[I<0.\] In this case,  $\rho-1$ remains positive, for otherwise setting $\rho=1$ in (\ref{dFrho}) would result in $F(1)=(d^2-I)/n>0$ in contradiction to (\ref{est:1}).
Now, for $\rho>1$ we have
\[
F(\rho)=\frac{2}{n}\rho^{2/n}\int_1^\rho \left(1-\frac{1}{r}\right)\frac{1}{r^{2/n}}dr
\leq \frac{2}{n}\rho^{2/n}(\rho-1).
\] 
This together with (\ref{dFrho})  yield 
\[
\frac{2}{n}\rho^{2/n}(\rho-1)\ge F(\rho)=\frac{1}{n}\left(d^2-\rho^{2/n}I\right)\ge -\frac{1}{n}\rho^{2/n}I
\]
and the lower bound, $\rho-1\ge -I/2$ follows.
Thus, by (\ref{dm}), we end up with a Riccati type of equation 
\[
d'\le-\dfrac{d^2}{n}+\dfrac{I}{2}.
\]
Since the invariant $I$ remains a negative constant, the solution exhibits blow-up $d=\nc\vu\rightarrow-\infty$ at a   finite time even if initially $d_0>0$. The density $\rho$ also approaches $\infty$ in finite time due to (\ref{dFrho}).
\end{proof}

The last step of proving the Main Theorem is just to combine the comparison principle in Lemma \ref{comp:lemma} with the above lemma. We notice that $\Omega$ is an open set and thus given any initial data $(d_0,\rho_0)\in\Omega$ for the original system, we can always find $\ep>0$ and initial data $(d_0+\ep,\rho_0-\ep)\in\Omega$ for the modified system. This latter initial data will lead to finite time blow-up of the modified system and therefore, by lemma \ref{comp:lemma}, initial data $(d_0,\rho_0)\in\Omega$ will lead to finite time blow-up of the original system.

\bibliographystyle{amsplain}
\bibliography{Cheng-Tadmor_EulerPoisson}

\providecommand{\bysame}{\leavevmode\hbox to3em{\hrulefill}\thinspace}
\providecommand{\MR}{\relax\ifhmode\unskip\space\fi MR }
\providecommand{\MRhref}[2]{%
  \href{http://www.ams.org/mathscinet-getitem?mr=#1}{#2}
}
\providecommand{\href}[2]{#2}
\begin{thebibliography}{10}

\bibitem{BrWi:Gra}
Michael~P. Brenner and Thomas~P. Witelski, \emph{On spherically symmetric
  gravitational collapse}, J. Statist. Phys. \textbf{93} (1998), no.~3-4,
  863--899.

\bibitem{ChaeTa:star}
Dongho Chae and Eitan Tadmor, \emph{On the finite time blow-up of the
  {Euler-Poisson} equations in ${R}^{2}$}, Comm.in Math. Sci. \textbf{6}
  (2008), no.~3, 785--789.

\bibitem{CoGr:EP}
St{\'e}phane Cordier and Emmanuel Grenier, \emph{Quasineutral limit of an
  {E}uler-{P}oisson system arising from plasma physics}, Comm. Partial
  Differential Equations \textbf{25} (2000), no.~5-6, 1099--1113.

\bibitem{DeMa:EP}
Pierre Degond and Peter~A. Markowich, \emph{A steady state potential flow model
  for semiconductors}, Ann. Mat. Pura Appl. (4) \textbf{165} (1993), 87--98.

\bibitem{ELT:EPstar}
Shlomo Engelberg, Hailiang Liu, and Eitan Tadmor, \emph{Critical thresholds in
  {E}uler-{P}oisson equations}, Indiana Univ. Math. J. \textbf{50} (2001),
  no.~Special Issue, 109--157, Dedicated to Professors Ciprian Foias and Roger
  Temam (Bloomington, IN, 2000).

\bibitem{Gamba:EP}
Irene~Mart{\'{\i}}nez Gamba, \emph{Stationary transonic solutions of a
  one-dimensional hydrodynamic model for semiconductors}, Comm. Partial
  Differential Equations \textbf{17} (1992), no.~3-4, 553--577.

\bibitem{Guo:EP}
Yan Guo, \emph{Smooth irrotational flows in the large to the {E}uler-{P}oisson
  system in {$\bold R\sp {3+1}$}}, Comm. Math. Phys. \textbf{195} (1998),
  no.~2, 249--265.

\bibitem{LiTa:spectral}
Hailiang Liu and Eitan Tadmor, \emph{Spectral dynamics of the velocity gradient
  field in restricted flows}, Comm. Math. Phys. \textbf{228} (2002), no.~3,
  435--466.

\bibitem{LiTa:HYP}
\bysame, \emph{Critical thresholds and conditional stability for {E}uler
  equations and related models}, Hyperbolic problems: theory, numerics,
  applications, Springer, Berlin, 2003, pp.~227--240.

\bibitem{LiTa:restricted}
\bysame, \emph{Critical thresholds in 2{D} restricted {E}uler-{P}oisson
  equations}, SIAM J. Appl. Math. \textbf{63} (2003), no.~6, 1889--1910
  (electronic).

\bibitem{LiTa:rotation}
\bysame, \emph{Rotation prevents finite-time breakdown}, Phys. D \textbf{188}
  (2004), no.~3-4, 262--276.

\bibitem{LuSm:whitedwarf}
T.~{Luo} and J.~{Smoller}, \emph{{Nonlinear Dynamical Stability of Newtonian
  Rotating and Non-rotating White Dwarfs and Rotating Supermassive Stars}},
  Communications in Mathematical Physics (2008), 166--+.

\bibitem{LNX:EP}
Tao Luo, Roberto Natalini, and Zhouping Xin, \emph{Large time behavior of the
  solutions to a hydrodynamic model for semiconductors}, SIAM J. Appl. Math.
  \textbf{59} (1999), no.~3, 810--830 (electronic).

\bibitem{LuSm:star}
Tao Luo and Joel Smoller, \emph{Rotating fluids with self-gravitation in
  bounded domains}, Arch. Ration. Mech. Anal. \textbf{173} (2004), no.~3,
  345--377.

\bibitem{Makino:star1}
Tetu Makino, \emph{On a local existence theorem for the evolution equation of
  gaseous stars}, Patterns and waves, Stud. Math. Appl., vol.~18,
  North-Holland, Amsterdam, 1986, pp.~459--479.

\bibitem{Makino:star2}
\bysame, \emph{Blowing up solutions of the {E}uler-{P}oisson equation for the
  evolution of gaseous stars}, Proceedings of the {F}ourth {I}nternational
  {W}orkshop on {M}athematical {A}spects of {F}luid and {P}lasma {D}ynamics
  ({K}yoto, 1991), vol.~21, 1992, pp.~615--624.

\bibitem{MaPe:EPnon}
Tetu Makino and Beno{\^{\i}}t Perthame, \emph{Sur les solutions \`a sym\'etrie
  sph\'erique de l'\'equation d'{E}uler-{P}oisson pour l'\'evolution
  d'\'etoiles gazeuses}, Japan J. Appl. Math. \textbf{7} (1990), no.~1,
  165--170.

\bibitem{MaNa:EP}
Pierangelo Marcati and Roberto Natalini, \emph{Weak solutions to a hydrodynamic
  model for semiconductors and relaxation to the drift-diffusion equation},
  Arch. Rational Mech. Anal. \textbf{129} (1995), no.~2, 129--145.

\bibitem{Perthame:EP}
Beno{\^{\i}}t Perthame, \emph{Nonexistence of global solutions to
  {E}uler-{P}oisson equations for repulsive forces}, Japan J. Appl. Math.
  \textbf{7} (1990), no.~2, 363--367.

\bibitem{PRV:EP}
F.~Poupaud, M.~Rascle, and J.-P. Vila, \emph{Global solutions to the isothermal
  {E}uler-{P}oisson system with arbitrarily large data}, J. Differential
  Equations \textbf{123} (1995), no.~1, 93--121.

\bibitem{Rein:2003:star}
Gerhard Rein, \emph{Non-linear stability of gaseous stars}, Arch. Ration. Mech.
  Anal. \textbf{168} (2003), no.~2, 115--130.

\bibitem{TaWe:EP}
Eitan Tadmor and Dongming Wei, \emph{On the global regularity of subcritical
  {E}uler-{P}oisson equations with pressure}, J. Eur. Math. Soc. (JEMS)
  \textbf{10} (2008), no.~3, 757--769.

\bibitem{Wang:EP}
Dehua Wang, \emph{Global solutions and relaxation limits of {E}uler-{P}oisson
  equations}, Z. Angew. Math. Phys. \textbf{52} (2001), no.~4, 620--630.

\bibitem{ChWa:EP}
Dehua Wang and Gui-Qiang Chen, \emph{Formation of singularities in compressible
  {E}uler-{P}oisson fluids with heat diffusion and damping relaxation}, J.
  Differential Equations \textbf{144} (1998), no.~1, 44--65.

\bibitem{Zhang:EP}
Bo~Zhang, \emph{Global existence and asymptotic stability to the full {$1$}{D}
  hydrodynamic model for semiconductor devices}, Indiana Univ. Math. J.
  \textbf{44} (1995), no.~3, 971--1005.

\end{thebibliography}

\end{document}